\documentclass[11pt]{article}
\usepackage{arxiv}
\usepackage{enumitem}
\usepackage{mathrsfs}
\usepackage{subcaption}
\usepackage{graphicx}
\usepackage{mathabx}
\usepackage{authblk}
\usepackage{comment}
\usepackage{bbold}

\mathtoolsset{showonlyrefs}

\title{Stability of the Markov-Krein Transform Under Classical Convolutions}

\author[1]{Pierre Mergny \thanks{mergny.pierre@gmail.com}}

\affil[1]{Département d’Informatique, École Normale Supérieure, PSL \& CNRS, Paris, France}

\date{\today}

\begin{document}

\maketitle

\begin{abstract}
We prove that, for every $c>0$, the image of the generalized Markov–Krein transform on probability measures with finite logarithmic moment is closed under classical additive convolution and under the Mellin convolution. This resolves the positivity conjecture for the additive high-temperature convolution \cite{MP22HTC,benaych2022matrix} a one-parameter interpolation between classical and free convolution arising in random matrix theory and representation theory in the regime $N\to\infty, N\beta_N/2\to c$. Our proof relies on a
convolution theorem for universally prestarlike functions.
\end{abstract}

\section{Introduction and Main Result}
For a Borel set $E \subseteq \mathbb{R}$, we denote by $\mathcal{P}(E)$ the set of probability measures on $E$. We define the set of probability measures with finite logarithmic moment on $\mathbb{R}$ and $\mathbb{R}_+$ by 
\begin{align} 
    \mathcal{P}_{\mathrm{log}}(\mathbb{R}) := \bigg\{ \mu \in \mathcal{P}(\mathbb{R}) : \int \log (1 + |x|) \mu(\mathrm{d}x) < \infty \bigg\},  \qquad  \mathcal{P}_{\mathrm{log}}(\mathbb{R}_+) :=  \mathcal{P}_{\mathrm{log}}(\mathbb{R}) \cap \mathcal{P}(\mathbb{R}_+). 
\end{align}
For a (Borel) map $T$ we denote by $T_\# \mu$ the pushforward of the measure $\mu$ by $T$.We denote by $\mathrm{Dir}(\alpha_1,\dots,\alpha_m)$  the law of the random variable with density $\Gamma(\alpha_1+\dots+\alpha_m)/(\prod_{j=1}^m \Gamma(\alpha_j)) \prod_{i=1}^m x_i^{\alpha_i -1}$ over the simplex $ \{ x_i \in [0,1], \,  i =1,\dots,m : \sum_{i=1}^m x_i =1 \}$ with $\alpha_1,\dots,\alpha_m>0$. For a Borel set $E$, $\mathds{1}_E$ is the indicator function of that set. We let $\mathbb{R}_+ = [0,\infty)$ and $\mathbb{C}_+ := \{ z \in \mathbb{C}: \mathfrak{I}z >0\}$. 

Our main object of study is the law of the mean of a Dirichlet process. We recall the definitions below,
using Ferguson’s construction \cite{ferguson1973bayesian} and the logarithmic-moment criterion of Lijoi–Regazzini \cite{Lijoi2009}, and refer the reader to \cite{kerov1998interlacing,FarautFourati2016} for additional results relating this object to representation theory and random matrix theory. 

\begin{definition}[Dirichlet Process, Dirichlet Mean, Markov-Krein Transform] Let $c>0$ and $\mu \in \mathcal{P}(\mathbb{R})$. A random probability measure is said to be a \emph{Dirichlet Process} with concentration $c$ and base distribution $\mu$, denoted $P \sim \mathsf{DP}(c \, \mu)$ if for every Borel partition $E_1,\dots,E_m$ of $\mathbb{R}$ we have 
\begin{align}
    (P(E_1),\dots,P(E_m)) \sim \mathsf{Dir}( c\mu(E_1), \dots, c \mu(E_m)) \, . 
\end{align}
If furthermore $\mu \in \mathcal{P}_{\mathrm{log}}(\mathbb{R})$, $\int |x| P(\mathrm{d}x) < \infty$ almost surely and the \emph{Dirichlet mean} of $P$ is defined as the random variable $Y_\mu = \int x P(\mathrm{d}x)$. We call its law the \emph{Markov-Krein Transform} (of order $c$) of the measure $\mu$ which we denote by 
\begin{align}
    \mathcal{M}_c \mu := \mathsf{Law}(Y_\mu) , \quad Y_\mu  = \int x P(\mathrm{d}x), \quad P \sim \mathsf{DP}(c \, \mu).
\end{align}
\end{definition}
The following identity relating the base measure $\mu$ to $\mathcal{M}_c \mu$, explains the terminology Markov-Krein Transform to denote the law of the Dirichlet mean. We follow here the results of  \cite[Theorem 1]{Lijoi2004}
for this relation and \cite[Theorem 2]{Lijoi2004} for injectivity.
\begin{proposition}[Markov-Krein Relation  \cite{Lijoi2004}]
\label{prop:MKR_and_property_MKT} 
For $\mu \in  \mathcal{P}_{\mathrm{log}}(\mathbb{R})$ and any $z \in \mathbb{C}_+$ we have 
\begin{align}
\label{eq:MK_relation}
    \int (z - y)^{-c} \mathcal{M}_c\mu(\mathrm{d}y) = \mathrm{exp} \bigg\{ -c \int \log ( z - x) \mu(\mathrm{d}x) \bigg\},
\end{align}
and the transform $\mathcal{M}_c$ is injective on $\mathcal{P}_{\mathrm{log}}(\mathbb{R})$.
\end{proposition}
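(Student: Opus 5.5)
We prove the identity \eqref{eq:MK_relation} first for finitely supported base measures, then pass to general $\mu\in\mathcal{P}_{\mathrm{log}}(\mathbb{R})$ by approximation, and finally deduce injectivity from the identity.

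\textbf{Step 1: discrete base measure.} Let $\mu=\sum_{i=1}^m p_i\delta_{x_i}$. By the defining property of $\mathsf{DP}(c\mu)$ applied to a partition separating the atoms, $Y_\mu=\sum_i W_i x_i$ with $W\sim\mathsf{Dir}(cp_1,\dots,cp_m)$. Represent $W_i=G_i/S$, where the $G_i\sim\mathrm{Gamma}(cp_i,1)$ are independent and $S=\sum_i G_i\sim\mathrm{Gamma}(c,1)$ is independent of $W$. For $z\in\mathbb{C}_+$ we have $\Re(-i(z-y))>0$, so the Gamma integral $\Gamma(c)\,(i/(z-y))^{c}=\int_0^\infty s^{c-1}e^{is(z-y)}\,\mathrm{d}s$ holds with principal branches. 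The branch bookkeeping is consistent because $z-y$ and $z-x$ both lie in $\mathbb{C}_+$. We then compute
\begin{align}
\Gamma(c)\,\mathbb{E}\big[(z-Y_\mu)^{-c}\big]\,i^{c}=\mathbb{E}\int_0^\infty s^{c-1}e^{is(z-Y_\mu)}\,\mathrm{d}s=\mathbb{E}\big[S^{-1}\cdot\Gamma(c)\,S^{c}\,e^{i S z-iS Y_\mu}\,S^{-c}\cdots\big].
\end{align}
This is more cleanly done through the independence trick: $\mathbb{E}[e^{it(Sz-SY_\mu)}]$ with $S Y_\mu=\sum_i G_i x_i$. That is,
\begin{align}
\Gamma(c)\,\mathbb{E}\big[(-i(z-Y_\mu))^{-c}\big]=\Gamma(c)\,\mathbb{E}\big[e^{i(Sz-SY_\mu)}S^{0}\big]\cdot(\text{normalization}),
\end{align}
using $\mathbb{E}[f(SW)]$ with $S$ independent of $W$. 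Concretely, $\mathbb{E}[e^{i\sum_i G_i(z-x_i)}]=\prod_i(1-i(z-x_i))^{-cp_i}$ after rescaling $z$. The clean formulation is to evaluate $\mathbb{E}\exp\{-\sum_i G_i\,\lambda(z-x_i)\}$ with $\lambda=-i$. Conditioning on $W$ gives $\mathbb{E}[(1+\lambda(z-Y_\mu))^{-c}]$, while independence of the $G_i$ gives $\prod_i(1+\lambda(z-x_i))^{-cp_i}$. Replacing $z$ by $z'$ with $1+\lambda(z-\cdot)=\lambda(z'-\cdot)$, both sides scale by the same $\lambda^{-c}$, and we obtain $\mathbb{E}[(z'-Y_\mu)^{-c}]=\prod_i(z'-x_i)^{-cp_i}$, which is \eqref{eq:MK_relation}.

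\textbf{Step 2: approximation (the main obstacle).} Take discrete $\mu_n\to\mu$ weakly, for instance by quantizing via $x\mapsto \lfloor nx\rfloor/n$ truncated at $\pm n$. Choose the quantization so that $\int\log(1+|x|)\,\mathrm{d}\mu_n\to\int\log(1+|x|)\,\mathrm{d}\mu$, with uniform integrability of $\log(1+|x|)$.

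For the right-hand side: since $|\log(z-x)|\le C_z+\log(1+|x|)$ for fixed $z\in\mathbb{C}_+$, the right side converges.

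For the left-hand side: realize all the processes on a common space by a stick-breaking (Sethuraman) construction, $P_n=\sum_k V_k\delta_{T_n(\xi_k)}$ with i.i.d.\ $\xi_k\sim\mu$ and $T_n$ the quantizer. Then $Y_{\mu_n}=\sum_k V_k T_n(\xi_k)\to\sum_kV_k\xi_k=Y_\mu$ almost surely, by dominated convergence with dominating series $\sum_k V_k|\xi_k|$. The finiteness of this series is exactly the Lijoi--Regazzini log-moment criterion assumed in the definition. Since $|(z-y)^{-c}|\le(\Im z)^{-c}$, bounded convergence gives convergence of the left-hand side.

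The delicate point is the almost sure convergence of $\sum_k V_k|\xi_k|$ under only a log-moment. We would handle it by citing the criterion in the definition, or by proving it directly: $V_k$ decays geometrically almost surely, and $\log|\xi_k|=o(k)$ almost surely by Borel--Cantelli under the log-moment assumption.

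\textbf{Step 3: injectivity.} Suppose $\mathcal{M}_c\mu=\mathcal{M}_c\nu$. Then for every $z\in\mathbb{C}_+$ the right sides of \eqref{eq:MK_relation} agree, so $\exp\{-c(L_\mu(z)-L_\nu(z))\}=1$, where $L_\mu(z)=\int\log(z-x)\,\mu(\mathrm{d}x)$. The function $L_\mu-L_\nu$ is holomorphic on $\mathbb{C}_+$, hence constant in $2\pi i\mathbb{Z}/c$ by continuity. It tends to $0$ as $z=iy$ with $y\to\infty$, by dominated convergence, since $\log(iy-x)-\log(iy)\to0$ with a log-moment bound. So $L_\mu=L_\nu$. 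Differentiating gives equal Stieltjes transforms $\int(z-x)^{-1}\mu(\mathrm{d}x)$, and Stieltjes inversion yields $\mu=\nu$.
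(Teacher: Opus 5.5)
Your proof is correct. It necessarily takes a different route from the paper, because the paper gives no argument for this statement. It imports the identity and the injectivity from \cite[Theorems~1 and~2]{Lijoi2004}, a treatment built on Lauricella multiple hypergeometric functions that also yields density formulas and further distributional facts about Dirichlet means. You instead give a self-contained proof in three parts:
\begin{enumerate}
\item the finitely supported case, from the Gamma--Dirichlet factorisation $G_i=SW_i$ with $S$ independent of $W$ (essentially the classical gamma-process proof of the Markov--Krein identity);
\item the extension to $\mathcal{P}_{\mathrm{log}}(\mathbb{R})$, by realising all the processes through one stick-breaking representation pushed forward by the quantisers $T_n$;
\item injectivity, by logarithmic differentiation down to the Stieltjes transform.
\end{enumerate}
This costs a page, but it needs only Lukacs' independence, Sethuraman's construction, dominated convergence and Stieltjes inversion.

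Your Borel--Cantelli sketch is sound. The strong law gives $\limsup_k k^{-1}\log V_k\le\mathbb{E}\log(1-\beta_1)=-1/c$ for $\beta_1\sim\mathrm{Beta}(1,c)$, while $\log(1+|\xi_k|)=o(k)$ a.s.\ under the log-moment. So you need not even quote the a.s.\ finiteness of $\int|x|P(\mathrm{d}x)$ that is built into the definition.

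Your Steps~2 and~3 are also the same devices the paper uses later. The pushforward coupling is the clipping argument with $T_R$ at the end of Section~\ref{sec:proof}. The logarithmic derivative is the identity behind Lemma~\ref{lem:weak_closure}.

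Step~1 needs three cosmetic repairs.
\begin{itemize}
\item Delete the two abandoned displays before the \emph{clean formulation}; as written they are not valid equalities.
\item Record that $\mathbb{E}[e^{-wG}]=(1+w)^{-a}$ holds for $\mathfrak{R}w>-1$, i.e.\ for $\mathfrak{I}z>-1$ when $w=-\mathrm{i}(z-x)$. Then $z'=z+\mathrm{i}$ sweeps all of $\mathbb{C}_+$. Keeping $z\in\mathbb{C}_+$, as you first wrote, only gives $\mathfrak{I}z'>1$ and needs an extra analytic-continuation step.
\item Your \emph{same $\lambda^{-c}$} remark rests on $(-\mathrm{i}w)^{-c}=(-\mathrm{i})^{-c}w^{-c}$ for $w\in\mathbb{C}_+$, together with $\sum_i p_i=1$. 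The branch identity holds because $\arg(-\mathrm{i}w)=\arg w-\pi/2\in(-\pi/2,\pi/2)$; say so explicitly.
\end{itemize}
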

Next, we recall the two classical convolutions. 
\begin{definition}[Classical additive and multiplicative convolutions]
\label{def:classical_convolution} For $\mu,\nu \in \mathcal{P}(\mathbb{R})$, their additive convolution $\mu \ast \nu$ and their multiplicative convolution  $\mu \circledast \nu$ are defined for any Borel set $E \subseteq \mathbb{R}$ by 
 \begin{align}
     (\mu \ast \nu)(E) = \int_{\mathbb{R}^2} \mathds{1}_E(x +y) \mu(\mathrm{d}x) \nu(\mathrm{d}y) ,
 \end{align}
 and
 \begin{align}
      (\mu \circledast \nu)(E) = \int_{\mathbb{R}^2} \mathds{1}_E(xy) \mu(\mathrm{d}x) \nu(\mathrm{d}y) .
 \end{align}
That is if $X \sim \mu, Y \sim \nu$ and $X,Y$ are independent, we have $\mu \ast \nu = \mathsf{Law}(X+Y)$ and $(\mu \circledast \nu) =   \mathsf{Law}(XY)$. The latter is known as the \emph{Mellin convolution} whenever $\mu,\nu \in \mathcal{P}(\mathbb{R}_+)$. 
\end{definition}

\noindent Our main result characterizes the stability of the Markov-Krein transform with respect to these convolutions. 

\begin{theorem}
\label{thm:main_theorem}
For any $c>0$, the following assertions hold:
\begin{enumerate}
    \item[(i)] For any $\mu, \nu \in \mathcal{P}_{\mathrm{log}}(\mathbb{R})$, there exists a unique probability measure $\mu \oplus_c \nu \in \mathcal{P}_{\mathrm{log}}(\mathbb{R})$ such that 
     \begin{align}
         \mathcal{M}_c( \mu \oplus_c \nu) = (\mathcal{M}_c \mu) \ast (\mathcal{M}_c \nu);
     \end{align}
     \item[(ii)] For any $\mu, \nu \in \mathcal{P}_{\mathrm{log}}(\mathbb{R}_+)$, there exists a unique probability measure $\mu \otimes_c \nu \in \mathcal{P}_{\mathrm{log}}(\mathbb{R}_+)$ such that 
     \begin{align}
         \mathcal{M}_c( \mu \otimes_c \nu) = (\mathcal{M}_c \mu) \circledast (\mathcal{M}_c \nu).
     \end{align} 
\end{enumerate}
\end{theorem}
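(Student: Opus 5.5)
Uniqueness in both parts follows at once from the injectivity of $\mathcal{M}_c$ on $\mathcal{P}_{\mathrm{log}}(\mathbb{R})$ in Proposition~\ref{prop:MKR_and_property_MKT}. The work is existence: showing that $(\mathcal{M}_c\mu)\ast(\mathcal{M}_c\nu)$ (resp.\ the Mellin convolution) lies in the image of $\mathcal{M}_c$. I plan to translate membership in the image into an analytic property of a Cauchy-type transform, and then invoke a convolution theorem for universally prestarlike functions, as the abstract suggests.

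\textbf{Step 1: characterizing the image.} By \eqref{eq:MK_relation}, $\rho=\mathcal{M}_c\mu$ exactly when $z\mapsto \int (z-y)^{-c}\rho(\mathrm{d}y)$ equals $\exp\{-c\int\log(z-x)\mu(\mathrm{d}x)\}$. Equivalently, $-\frac1c\log\int(z-y)^{-c}\rho(\mathrm{d}y)$ is the logarithmic potential of a probability measure. By the Nevanlinna representation, this holds iff its derivative $F_\rho(z):=\int (z-y)^{-c-1}\rho(\mathrm{d}y)\big/\int (z-y)^{-c}\rho(\mathrm{d}y)$ is the Cauchy transform of a probability measure. This means $F_\rho$ maps $\mathbb{C}_+$ to $\mathbb{C}_-$ and $zF_\rho(iy)\to1$, together with a log-moment tail condition. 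For the Mellin case on $\mathbb{R}_+$, I will use the analogous statement on the slit plane $\mathbb{C}\setminus\mathbb{R}_+$ in the variable $w=1/z$. There $\int(1-wy)^{-c}\rho(\mathrm{d}y)$ has the form $\exp\{-c\int\log(1-wx)\mu(\mathrm{d}x)\}$. By Ruscheweyh's theory, this is precisely the statement that $w\,\int(1-wy)^{-c}\rho(\mathrm{d}y)$ is \emph{universally prestarlike of order} $\alpha=1-c/2$ (Ruscheweyh's order convention, adapting the exponent as needed) on $\mathbb{C}\setminus[1,\infty)$. I will first prove this equivalence for compactly supported $\mu$, and then extend it by approximation.

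\textbf{Step 2: the Mellin case (ii).} For compactly supported $\mu,\nu$ on $\mathbb{R}_+$, the generating function of $(\mathcal{M}_c\mu)\circledast(\mathcal{M}_c\nu)$ is the Hadamard product of the two generating functions. This is because $\int(1-wy)^{-c}$ expands as $\sum_n \frac{(c)_n}{n!}m_n w^n$ and the moments multiply. Renormalized by $(c)_n/n!$, this becomes a weighted Hadamard product. Ruscheweyh's convolution theorem asserts that universally prestarlike functions of order $\alpha$ are closed under Hadamard product. The precise form needed here must accommodate the weight $n!/(c)_n$, i.e.\ $f\star g\star k_c^{-1}$ where $k_c=w(1-w)^{-c}$. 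Applying it yields a function of the required form, hence $\mu\otimes_c\nu$ exists. I expect this step to be the main obstacle: one must identify the exact prestarlike class and verify that the version of the convolution theorem with the $k_c^{-1}$ twist holds for all $c>0$, not just $c\le 2$ (order $\alpha\le1$). If only a restricted range is available, I would try extending via a subordination/semigroup argument in $c$.

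\textbf{Step 3: the additive case (i) and approximation.} For compactly supported $\mu,\nu$, translate both measures into $\mathbb{R}_+$ and write $x+y$ as a limit of multiplicative structures. Concretely, $\int(z-y)^{-c}$ of a sum has exponential generating structure, with $e^{t(x+y)}=e^{tx}e^{ty}$. Then $\mathcal{M}_c$ of the pushforward under $x\mapsto 1+\varepsilon x$, Mellin-convolved, gives $1+\varepsilon(x+y)+O(\varepsilon^2)$. Rescaling by $(\cdot-1)/\varepsilon$ and letting $\varepsilon\to0$ recovers $\ast$. Here one uses that $\mathcal{M}_c$ commutes with affine maps (clear from the Dirichlet mean) and with weak limits. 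The image of $\mathcal{M}_c$ on compact supports is weakly closed under such limits, by Step~1: Cauchy transforms of probability measures are closed under locally uniform limits with tightness. Finally, for general $\mu,\nu\in\mathcal{P}_{\mathrm{log}}$, truncate them, apply the compact case, and pass to the limit. Tightness of the resulting $\mu_n\oplus_c\nu_n$ follows from controlling $\int\log(1+|x|)$ through \eqref{eq:MK_relation} evaluated at $z=iy$ with $y\to\infty$. The limit identity then follows from continuity of both sides of \eqref{eq:MK_relation}.
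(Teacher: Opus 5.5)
There is a genuine gap in Step~2, which is the core of the argument. It comes from a misidentification in Step~1. Your claim that $\rho$ lies in the image of $\mathcal{M}_c$ iff $w\int(1-wy)^{-c}\rho(\mathrm{d}y)$ is universally prestarlike of order $1-c/2$ is false. For $c=2$ the order is $0$, i.e.\ universally convex, and $\rho=\delta_1=\mathcal{M}_2\delta_1$ gives the Koebe function $w(1-w)^{-2}$, which is starlike but not convex.

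The universally prestarlike function is the plain moment generating function $H_\rho(w)=w\int(1-wy)^{-1}\rho(\mathrm{d}y)$, after rescaling to $\rho\in\mathcal{P}([0,1])$ via Lemma~\ref{lem:affine_shift}. Your function is its Ruscheweyh derivative, $\Delta^cH_\rho=w\,G^{(c)}_\rho(w)$. With this identification, Theorem~\ref{thm:prestartlike_stieltjes}(i) and \eqref{eq:quotient_simplication} give the characterization you want: $\Delta^{c+1}H_\rho/\Delta^cH_\rho=1+\frac1c\,w(G^{(c)}_\rho)'(w)/G^{(c)}_\rho(w)$ equals $\int(1-wx)^{-1}\mu(\mathrm{d}x)$ for some $\mu\in\mathcal{P}([0,1])$ iff $G^{(c)}_\rho=\exp\{-c\int\log(1-wx)\mu(\mathrm{d}x)\}$, i.e.\ iff $\rho=\mathcal{M}_c\mu$. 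Moments of independent products multiply, so $H_{\mathcal{M}_c\mu}\odot H_{\mathcal{M}_c\nu}$ is exactly the moment generating function of $(\mathcal{M}_c\mu)\circledast(\mathcal{M}_c\nu)$, with no weight. Hence Theorem~\ref{thm:prestartlike_stieltjes}(ii) applies verbatim, and a second use of part~(i) produces $\mu\otimes_c\nu$.

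Because you applied the convolution theorem to the wrong functions, you were left needing an unproved closure statement for $f\star g\star k_c^{-1}$. You flag this as the main obstacle and propose an unspecified subordination argument in $c$, so as written the central step is not proved. The twist is not cosmetic. For the Koebe function above, $f\star f=\sum_n n^2w^n$ has second coefficient $4$, so it is not even univalent. Meanwhile $f\star g\star k_c^{-1}=\Delta^c(H_\rho\odot H_{\rho'})$ is precisely the standard closure theorem transported by $\Delta^c$.

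Your concern about the range of $c$ is also unfounded, and the parenthetical is inverted: $c\le 2$ corresponds to $\alpha\ge 0$, not $\alpha\le1$. Since $\alpha=1-c/2<1$ for every $c>0$, all $c>0$ lie in the full range of Ruscheweyh's theorem.

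Once Step~2 is repaired, your Step~3 is essentially the paper's argument. That step consists of the contraction $x\mapsto 1+\varepsilon x$ with rescaling by $(\cdot-1)/\varepsilon$, truncation, and weak closure of the image via the logarithmic derivative and the log-moment bound along $z=\mathrm{i}y$.
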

This result is motivated by the frameworks of \cite{MP22HTC} in the Physics literature and \cite{benaych2022matrix}, which study the high-temperature limit of the $\beta$-sum (see \cite[Section 1]{gorin2020crystallization} for an introduction) as $N\to\infty$ and $N\beta_N/2\to c>0$. The resulting binary operation $\mu \oplus_c \nu$ interpolates between classical convolution $\mu \ast \nu$ (Def.~\ref{def:classical_convolution}) as $c\downarrow0$ and free additive convolution $\mu \boxplus\nu$ as $c\to\infty$; see \cite{mingo2017free} for an introduction to free probability. Mergny--Potters identify the limiting rank-one additive spherical integral with the moment generating function of $\mathcal M_c\mu$. while Benaych-Georges--Cuenca--Gorin develop a moment approach using multivariate Bessel functions. Both works explicitly makes the connection from this with construction to classical convolution of Markov--Krein transforms \cite[Section~5.1]{MP22HTC} \cite[Section~3.4]{benaych2022matrix}. However, the resulting distribution $\mu \oplus_c \nu$ (respectively the associated normalized moments sequence) was only conjecture to correspond to a probability measure for arbitrary inputs, see also \cite{xu2026rectangular} for a variant of this problem. Positivity was show for some specific choice of the measures $\mu$ (and/or $\nu)$ with the symmetric Bernoulli self-convolution \cite{mergny2022explicitexamplehightemperature} and, under suitable moment hypotheses, convolution with a $c$-Gaussian law through Dyson Brownian motion \cite[Theorems~IV.4 and~IV.6]{nakano2025classical}; see also \cite[Example~10.29 and Corollary~10.32]{yao2025approximating}. Our theorem resolves this positivity conjecture and extends the construction of $\mu\oplus_c\nu$ to all probability measures with finite logarithmic moment.

To the knowledge of the author, the high-temperature product counterpart has not yet been constructed in the literature. The expression of the associated rank-one multiplicative spherical transform  see \cite{mergny2021asymptoticbehaviormultiplicativecounterpart,husson2025asymptotic} suggests that it converges to the Mellin transform of $\mathcal{M}_c \mu$, as established for $c>1$ under additional hypotheses in \cite[Theorem 1.3]{zhang2025matrixharmonicanalysishigh}.  Since the same corresponding construction has been used as a building block for the high-temperature sum, this strongly suggests that the 
multiplicative operation $\mu \otimes_c \nu$ in Part (ii)  corresponds precisely to this high-temperature product. We prove the multiplicative statement independently of this identification and leave this correspondence for future work.

The remainder of this paper is organized as follows: in Sec.~\ref{sec:prestarlike}, we introduce the class of (universally) prestarlike functions and recall their main properties. The proof is given in Sec.~\ref{sec:proof} and proceeds in three steps. First, we prove the multiplicative statement for compactly supported measures on $\mathbb R_+$ by reducing it to the Hadamard convolution theorem for universally prestarlike functions. Second, we obtain the analog additive statement  through an affine contraction of the multiplicative operation. Finally, we extend both statements to measures with finite logarithmic moment using a standard weak-closure argument.

\paragraph{Acknowledgments :} The author warmly thanks Joon Hyung Lee and Jiyuan Zhang for stimulating discussions that motivated this work and Marc Potters for introducing him to this problem.

\section{Prestarlike functions}
\label{sec:prestarlike}
We recall in this section the properties of prestarlike functions that will be used later on.  Our
conventions follow essentially those of \cite[Section 1.1-1.3]{ruscheweyh2009completely}. For the classical
theory on the disk, see also \cite{ruscheweyh1982convolutions}.  Let $\mathbb{D} = \{ z \in \mathbb{C} : |z| <1 \}$ be the open disk on the complex plane and $\mathbb{\Lambda} = \mathbb{C} \setminus [1,\infty)$.  We use the Pochhammer symbol $(a)_n = a (a+1)\dots (a+n-1)$ with the standard convention $(a)_0 =1$. We recall that a holomorphic germ at $0$ means an equivalence class of functions holomorphic on neighborhoods of $0$  where two functions are identified if they coincide on some common neighborhood of $0$. 

For  $\Omega \subseteq \mathbb{C}$ a domain containing the origin, we denote by 
\begin{align}
    \mathcal{H}_1(\Omega) = \{ f \,  \mbox{holomorphic on } \Omega: f(0) = 0, f'(0) = 1  \}. 
\end{align}
Each $f \in \mathcal{H}_1(\Omega)$ has a Taylor expansion $f(z) = z \sum_{n \ge 0} f_n z^n$ with $f_0=1$  in a neighborhood of the origin and if $\Omega = \mathbb{D}$ that series is convergent on $\mathbb{D}$. 

The \emph{Hadamard product} of two power series is defined as 
\begin{align}
     \bigg(\sum_{n \ge 0} a_n z^n \bigg) \odot \bigg(\sum_{n \ge 0} b_n z^n \bigg) = \sum_{n \ge 0} a_n b_n z^n .
\end{align} 
Let $a>0$, and denote by
\begin{align}
\label{eq:binomial_series}
    b_a(z) := \frac{z}{(1-z)^{a}} = z\sum_{n \ge 0} \frac{(a)_n}{n!} z^n \in \mathcal{H}_1(\mathbb{D}) ,
\end{align}
where the second equality follows from the binomial series. For $f = z (\sum_{n \ge 0} f_n z^n)  \in \mathcal{H}_1(\mathbb{D})$,  we define the \emph{Ruscheweyh convolution operator} $\Delta^a$ (denoted $D^a$ in \cite{ruscheweyh2009completely}) on $\mathcal{H}_1(\mathbb{D})$ by
\begin{align}
    \Delta^a f=b_a \odot f = \sum_{n \geq 0} \frac{(a)_n}{n!} f_n z^{n+1}.
\end{align}
For $n \in \mathbb{N}_*$, this operator corresponds to a differential operator since 
\begin{align}
    \Delta^{n+1} f = \frac{z}{n!} \frac{\mathrm{d}^n}{\mathrm{d}z^n} \big( z^{n-1} f \big) ,\qquad \mbox{and in particular }  \Delta^{1} f =f \; \mbox{and } \Delta^{2} f  = z f^\prime.
\end{align}

\begin{definition}[Prestarlike functions on the disk and universally prestarlike functions \cite{ruscheweyh2009completely}] For $\alpha<1$, we let 
\begin{align}
    \mathcal{S}_\alpha := \bigg\{ f \in \mathcal{H}_1(\mathbb{D}): \frac{f(z)}{z} \neq 0 \mbox{ on } \mathbb{D}, \;  \mathfrak{R} \frac{z f'(z)}{f(z)} > \alpha  \mbox{ on } \mathbb{D} \bigg\} .  
\end{align}
Note that the functions $\frac{f}{z} , \mathfrak{R} \frac{z f'}{f}$ are assigned their removal value $1$ at the origin. We define the \emph{class of prestarlike functions of order $\alpha$} has 
\begin{align}
    \mathcal{R}_\alpha := \bigg\{ f \in \mathcal{H}_1(\mathbb{D}) : \Delta^{2(1-\alpha)} f \in \mathcal{S}_\alpha \bigg\}. 
\end{align}
For $\gamma \in \mathbb{C}\setminus \{0\}$ and $t \in [0,1]$ let
\begin{align}
    \omega_{\gamma,t}(z) = \frac{\gamma z}{1 -tz }, \quad z \in \mathbb{D}.
\end{align}
For $| \gamma + t| <1$, $   \omega_{\gamma,t} (\mathbb{D}) \subseteq \mathbb{\Lambda}$ and the \emph{class of universally prestarlike functions of order $\alpha$} is defined as
\begin{align}
    \mathcal{R}^{(\mathrm{u})}_{\alpha} :=\{ f \in \mathcal{H}_1(\mathbb{\Lambda}) : \gamma^{-1} f \circ \omega_{\gamma,t} \in \mathcal{R}_\alpha \mbox{ for every } t \in [0,1], \gamma \neq 0, |\gamma +t| \le 1\}. 
\end{align}
\end{definition}
We also introduce the \emph{Stietljes class} of \cite[Section 1.1]{ruscheweyh2009completely}
\begin{align}
\label{eq:def_Stieltjes_class}
    \mathcal{T} 
    &:=
    \bigg \{ \mathbb{\Lambda} \ni z \mapsto \int_{[0,1]} \frac{\rho(\mathrm{d}x)}{1 -zx} : \rho \in \mathcal{P}([0,1]) \bigg\}.
\end{align}
Every $f \in \mathcal{T}$ has $f(0)=1$ and is also holomorphic on $\mathbb{\Lambda}$ , see \cite{ruscheweyh2009completely}. The two results below are taken from \cite[Theorem 1.1]{ruscheweyh2009completely} for (i) and the convolution closure theorem of \cite{Ruscheweyh2008} for (ii).
\begin{theorem}[\cite{ruscheweyh2009completely,Ruscheweyh2008}]
\label{thm:prestartlike_stieltjes} Let $\alpha<1$, the following hold: 
\begin{enumerate}
    \item[(i)] for $f \in \mathcal{H}_1(\mathbb{\Lambda})$, we have  $ f \in \mathcal{R}^{(\mathsf{u})}_\alpha \Leftrightarrow \frac{\Delta^{3-2\alpha} f}{\Delta^{2(1-\alpha)}f} \in \mathcal{T}$;
    \item[(ii)] if $f,g \in \mathcal{R}^{(\mathsf{u})}_\alpha$ then $f \odot g$ has a holomorphic continuation to  $\mathbb{\Lambda}$ and  $f \odot g \in \mathcal{R}^{(\mathsf{u})}_\alpha$. 
\end{enumerate}
\end{theorem}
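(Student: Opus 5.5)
Since Theorem~\ref{thm:prestartlike_stieltjes} is quoted from \cite{ruscheweyh2009completely,Ruscheweyh2008}, I only sketch how I would re-derive it. Set $a=2(1-\alpha)>0$ and $F=\Delta^a f$.

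For (i), the first step is the first-order identity
\begin{align}
\Delta^{a+1} f = \tfrac{1}{a}\, z(\Delta^a f)' + \big(1-\tfrac1a\big)\Delta^a f .
\end{align}
It follows coefficientwise from $\frac{(a+1)_n}{n!}=\frac{(a)_n}{n!}\cdot\frac{a+n}{a}$. With $a+1=3-2\alpha$, it gives
\begin{align}
h:=\frac{\Delta^{3-2\alpha}f}{\Delta^{2(1-\alpha)}f}=\frac{1}{a}\Big(\frac{zF'}{F}+a-1\Big).
\end{align}
Hence the condition $\mathfrak{R}\, zF'/F>\alpha$ on $\mathbb{D}$ is equivalent to $\mathfrak{R}\,h>1/2$ on $\mathbb{D}$.

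Next I would establish a covariance rule for $\Delta^a$ under the Möbius maps $\omega_{\gamma,t}$. Testing on the extremal functions $b_a(xz)$, $\Delta^a(\gamma^{-1}f\circ\omega_{\gamma,t})$ should equal $\gamma^{-1}(\Delta^a f)\circ\omega_{\gamma,t}$ times an explicit factor $(1-tz)^{-a}$-type weight. Then $\gamma^{-1}f\circ\omega_{\gamma,t}\in\mathcal{R}_\alpha$ becomes the statement that $\mathfrak{R}\,h>1/2$ after pulling back by $\omega_{\gamma,t}$, up to an explicit correction affine in $t$. Letting $(\gamma,t)$ range over $|\gamma+t|\le 1$, the images $\omega_{\gamma,t}(\mathbb{D})$ are exactly the discs in $\mathbb{\Lambda}$ through $0$ tangent to appropriate directions. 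The family of conditions then says that $w\mapsto h(1/w)$ is a Pick-type function on $\mathbb{C}\setminus[0,1]$ with the correct normalization $h(0)=1$. The Nevanlinna representation then yields $h(z)=\int_{[0,1]}\rho(\mathrm{d}x)/(1-xz)$ with $\rho$ a probability measure, that is $h\in\mathcal{T}$.

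For the converse I would check the extremal case $h=1/(1-xz)$ directly and conclude by convexity of $\mathcal{T}$. This works because the slicewise condition $\mathfrak{R}(\cdot)>1/2$ is convex in $h$.

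For (ii), I would use (i) to reduce to the Stieltjes ratio. The Hadamard product is bilinear, and by (i) the class is parametrized by $\rho$. I would first prove the disk statement: $\mathcal{R}_\alpha\odot\mathcal{R}_\alpha\subseteq\mathcal{R}_\alpha$, which is the Ruscheweyh--Sheil-Small convolution theorem proved via the duality principle. I would then upgrade it to the universal class by checking that composition with $\omega_{\gamma,t}$ interacts with $\odot$ compatibly on the extremal kernels $b_a(xz)$.

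Holomorphic continuation of $f\odot g$ to $\mathbb{\Lambda}$ I would obtain from an integral representation. Writing $F=\Delta^a f$ via $F'/F$ expressed through $\rho$ exhibits $f$ as a superposition of $b_a(xz)$-type kernels. Their Hadamard products, $b_a(xz)\odot b_a(yz)$, are ${}_2F_1$ functions of $xyz$ and extend to $\mathbb{\Lambda}$.

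The main obstacle is the covariance identity for $\Delta^a$ under $\omega_{\gamma,t}$ together with the exact identification of the union of the discs $\omega_{\gamma,t}(\mathbb{D})$. These two facts are what turn the family of disk conditions into a Pick condition. I would first try to verify the identity on the extremal kernels and extend it by linearity and density.
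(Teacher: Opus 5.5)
The paper does not prove this theorem; it imports (i) from \cite[Theorem~1.1]{ruscheweyh2009completely} and (ii) from \cite{Ruscheweyh2008}. Your sketch therefore has to stand on its own, and for (ii) it does not.

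\textbf{Part (ii).} Your reduction relies on bilinearity of $\odot$ plus a check on kernels, but for $\alpha<1$ the map $\rho\mapsto f$ is nonlinear and $\mathcal{R}^{(\mathrm{u})}_\alpha$ is not convex. For $\alpha=1/2$, both $z$ and $z/(1-z)$ belong to the class, since their quotients $zf'/f$ are $1$ and $(1-z)^{-1}$. Their average, however, has $zf'/f=1+(1-z)^{-1}-(1-z/2)^{-1}$, whose representing signed measure $\delta_0+\delta_1-\delta_{1/2}$ is not positive.

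Composition with $\omega_{\gamma,t}$ also does not commute with $\odot$. For $k_x(z)=z/(1-xz)$ one has $\gamma^{-1}k_x\circ\omega_{\gamma,t}=k_{t+\gamma x}$, so $\gamma^{-1}(k_x\odot k_y)\circ\omega_{\gamma,t}=k_{t+\gamma xy}$. Hadamard products of transformed kernels are instead $k_{(t_1+\gamma_1 x)(t_2+\gamma_2 y)}$, and the two agree for all $x,y$ only if $t=0$. Hence the disc theorem $\mathcal{R}_\alpha\odot\mathcal{R}_\alpha\subseteq\mathcal{R}_\alpha$ covers only $t=0$ (rotations and dilations). For $t>0$ you need a genuine convolution theorem for prestarlike functions on the discs and half-planes $\omega_{\gamma,t}(\mathbb{D})$ themselves, and nothing in your sketch supplies it.

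The paper itself shows why a linear kernel computation cannot suffice. By (i) and the Markov--Krein relation, $\mathcal{R}^{(\mathrm{u})}_{1-c/2}=\{z\int(1-yz)^{-1}\nu(\mathrm{d}y):\nu\in\mathcal{M}_c(\mathcal{P}([0,1]))\}$, and $\odot$ corresponds to $\circledast$ of the mixing measures. So (ii) is equivalent to the $[0,1]$ case of Proposition~\ref{prop:Multiplicative_conv_compact_support}.

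\textbf{Part (i).} Your covariance rule is correct and easy: $\Delta^a(\gamma^{-1}f\circ\omega_{\gamma,t})=(1-tz)^{1-a}\,\gamma^{-1}(\Delta^a f)\circ\omega_{\gamma,t}$, which you can check on $k_x$ and then compare coefficients in $x$. So universality means $\mathfrak{R}\big[h(\omega_{\gamma,t}(z))/(1-tz)\big]>1/2$. Your direction $h\in\mathcal{T}\Rightarrow f\in\mathcal{R}^{(\mathrm{u})}_\alpha$ then works, since $h=(1-xz)^{-1}$ yields $(1-(t+\gamma x)z)^{-1}$ with $|t+\gamma x|\le 1$.

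The other direction is only asserted, and the criterion you state is too weak. The union of the discs gives only holomorphy of $h$ on $\mathbb{C}\setminus[1,\infty)$. Requiring $h(1/w)$ to be of Pick type with $h(0)=1$ does not force $\rho\ge 0$: $h=(1+z)/(1-z)=-1+2(1-z)^{-1}$ satisfies it but is not in $\mathcal{T}$, and it already violates $\mathfrak{R}h>1/2$ on $\mathbb{D}$.

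The missing step is a degeneration.
\begin{enumerate}
\item Take $t=1$ and let $\gamma\to 0$ along rays $\arg\gamma=\psi\in(\pi/2,3\pi/2)$, which $|\gamma+1|\le 1$ allows. Then $\omega_{\gamma,1}(\mathbb{D})=\{\mathfrak{R}(w/\gamma)>-1/2\}$ contains the half-plane $\{\mathfrak{R}(e^{-\mathrm{i}\psi}w)>0\}$, and the weight is $1/(1-z)=1+w/\gamma$.
\item Multiplying the condition by $|\gamma|$ and letting $|\gamma|\to 0$ gives $\mathfrak{R}(e^{-\mathrm{i}\psi}wh(w))\ge 0$ on that half-plane.
\item Letting $\psi\to\pi/2$ and $\psi\to 3\pi/2$ gives $\mathfrak{I}(wh(w))\,\mathfrak{I}w\ge 0$. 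Thus $zh(z)$ is a Pick function holomorphic off $[1,\infty)$.
\item Its Nevanlinna representation is $zh(z)=bz+\int_{[1,\infty)} z\,\sigma(\mathrm{d}s)/(s(s-z))$ with $b\ge 0$. This gives $h(z)=\int_{[0,1]}\rho(\mathrm{d}x)/(1-xz)$ with $\rho=b\,\delta_0+(s\mapsto 1/s)_\#(s^{-2}\sigma)\ge 0$.
\item Finally, $h(0)=1$ makes $\rho$ a probability measure.
\end{enumerate}
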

The quotient in Part-(i) is initially understood as a germ at the origin. Since it belongs to $\mathcal{T}$, it admits a holomorphic continuation to $\mathbb{\Lambda}$. 

To connect with the previous Markov-Krein formalism with parameter $c>0$, note that we will be using this theorem with $\alpha = 1-c/2 <1$. Since $(c+1)_n/(c)_n = (1+n/c) $, and $z (\Delta^{c}f)' - \Delta^{c}f = \sum_{n\ge 0} \frac{ n (c)_n}{n!}f_n z^{n+1}$,  we have $\Delta^{c+1} f= \Delta^{c} f + \frac{1}{c} \big(  z (\Delta^{c}f)' - \Delta^{c}f \big)$, and in particular the quotient in Part-(i) writes
\begin{align}
\label{eq:quotient_simplication}
    \frac{\Delta^{c+1} f}{\Delta^{c}f} = 1 + \frac{1}{c} \bigg(  \frac{z (\Delta^{c}f)'(z) }{\Delta^{c}f(z)} -1 \bigg) \, .
\end{align}

\section{Proof of the Main Theorem}
\label{sec:proof}
In the following, for $E \subseteq \mathbb{R}$, we denote by $\mathcal{P}_\mathrm{c}(E)$ the set of compactly supported probability measures on $E$. We write $\mathrm{Supp} \mu$ for the support of $\mu$. $\Rightarrow$ denotes weak convergence while $\xrightarrow[]{\mathrm{a.s.}}$ denotes almost-sure convergence. $\mathrm{i} = \sqrt{-1}$.

\subsection{Additional Properties of the Markov-Krein Transform}
We recall and give additional properties of the Markov-Krein Transform that will be used later on. 
\begin{lemma}[Support and continuity on a common compact interval] Let $c>0$.
\label{lem:support_MK}
\begin{enumerate}
    \item[(i)] If $\mu \in \mathcal{P}_{\mathrm{c}}(\mathbb{R})$ then $\mu$ and $\mathcal{M}_c \mu$ have the same smallest closed interval containing their respective supports. 
    \item[(ii)] If $\mu_n \Rightarrow \mu$ with $\mu_n \in \mathcal{P}_{\mathrm{c}}(\mathbb{R})$ supported in the same compact interval then $\mathcal{M}_c \mu_n \Rightarrow \mathcal{M}_c \mu$.
\end{enumerate}
\end{lemma}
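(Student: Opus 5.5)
Both parts will be proved directly from the Dirichlet-process representation $Y_\mu=\int x\,P(\mathrm{d}x)$ with $P\sim\mathsf{DP}(c\,\mu)$, using the Markov--Krein relation \eqref{eq:MK_relation} only where it is convenient. Let $[a,b]$ denote the smallest closed interval containing $\mathrm{Supp}\,\mu$.

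\emph{Part (i).} The inclusion $\mathrm{Supp}\,\mathcal{M}_c\mu\subseteq[a,b]$ is immediate: $P$ is almost surely a probability measure carried by $\mathrm{Supp}\,\mu\subseteq[a,b]$, since $P(\mathbb{R}\setminus\mathrm{Supp}\,\mu)\sim\mathsf{Beta}(0,c)=\delta_0$. Hence $Y_\mu$ is a convex combination of points of $[a,b]$.

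For the reverse inclusion, suppose first that $a<b$. It suffices to show that for every $\varepsilon>0$ both $\mathbb{P}(Y_\mu<a+\varepsilon)>0$ and $\mathbb{P}(Y_\mu>b-\varepsilon)>0$. Take the partition $E_1=[a,a+\varepsilon/2)$, $E_2=\mathbb{R}\setminus E_1$. Since $a$ lies in the support, $\mu(E_1)>0$, so $P(E_1)\sim\mathsf{Beta}(c\mu(E_1),c\mu(E_2))$. This law has positive mass near $1$ whenever $\mu(E_2)>0$; if $\mu(E_2)=0$ then $P(E_1)=1$ almost surely, which is even better.
\begin{itemize}
\item[] On the event $\{P(E_1)>1-\eta\}$ we have $Y_\mu\le (a+\varepsilon/2)+\eta\,(b-a)$.
\end{itemize}
Choosing $\eta$ small gives $Y_\mu<a+\varepsilon$ with positive probability. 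The upper endpoint is handled symmetrically. Since the closed support of $\mathcal{M}_c\mu$ therefore contains points arbitrarily close to $a$ and to $b$, its hull is exactly $[a,b]$. In the degenerate case $a=b$, both measures equal $\delta_a$.

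\emph{Part (ii).} Fix a compact interval $K\supseteq\mathrm{Supp}\,\mu_n$ for all $n$. Then $\mathrm{Supp}\,\mu\subseteq K$, and by (i) all the measures $\mathcal{M}_c\mu_n$ live on $K$, so the family is tight. By Prokhorov, it suffices to identify every subsequential limit $\lambda$ as $\mathcal{M}_c\mu$.

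Fix $z\in\mathbb{C}_+$.
\begin{itemize}
\item[] The maps $y\mapsto (z-y)^{-c}$ and $x\mapsto\log(z-x)$ (principal branches; $z-y\in\mathbb{C}_+$) are bounded and continuous on $K$.
\end{itemize}
Weak convergence therefore lets us pass to the limit on both sides of \eqref{eq:MK_relation}. This gives $\int (z-y)^{-c}\lambda(\mathrm{d}y)=\int(z-y)^{-c}\mathcal{M}_c\mu(\mathrm{d}y)$ for all $z\in\mathbb{C}_+$.

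It remains to conclude $\lambda=\mathcal{M}_c\mu$, and this is the main obstacle. One cannot simply invoke injectivity of $\mathcal{M}_c$, because that statement concerns base measures, whereas here both $\lambda$ and $\mathcal{M}_c\mu$ play the role of images. I would argue directly that the generalized Stieltjes transform $\int (z-y)^{-c}\rho(\mathrm{d}y)$ determines a compactly supported $\rho$. For $|z|>\sup_K|y|$, expand
\[
(z-y)^{-c}=z^{-c}\sum_{k\ge0}\frac{(c)_k}{k!}\,\frac{y^k}{z^k}.
\]
The transform is analytic in $z$ there, and since $(c)_k>0$ for every $k$, equality of the transforms on $\mathbb{C}_+$ propagates by analytic continuation to equality of the coefficients. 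This yields equality of all moments $\int y^k\,\mathrm{d}\rho$. Compactly supported measures are determined by their moments, so $\lambda=\mathcal{M}_c\mu$.

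As a fallback that avoids the moment argument, one can couple the processes via the stick-breaking representation $P_n=\sum_i w_i\delta_{\xi^n_i}$ with $\xi^n_i=F_{\mu_n}^{-1}(U_i)$. Then $\xi^n_i\to\xi_i$ almost surely at continuity points, and bounded convergence on $K$ gives $Y_{\mu_n}\to Y_\mu$ almost surely.
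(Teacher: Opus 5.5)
Your proof is correct, but it takes a different route from the paper, which gives no direct argument. The paper cites Lijoi--Regazzini for (i). For (ii) it cites the weak continuity of the Dirichlet process in its base measure (Dello Schiavo, Theorem~3.10); continuity of the mean then follows because $P\mapsto\int x\,P(\mathrm{d}x)$ is weakly continuous on probability measures carried by a fixed compact interval.

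You instead give self-contained proofs:
\begin{itemize}
\item For (i), you use the two-set partition $E_1=[a,a+\varepsilon/2)$ and the full support on $[0,1]$ of the Beta law of $P(E_1)$. The easy inclusion relies on the usual convention that a zero Dirichlet parameter forces the corresponding coordinate to vanish; the paper's definition of $\mathrm{Dir}$ only covers positive parameters.
\item For (ii), you use tightness from (i), pass to the limit on both sides of \eqref{eq:MK_relation}, and show that the generalized Stieltjes transform determines a compactly supported measure, via the expansion in powers of $1/z$ and $(c)_k>0$.
\end{itemize}

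You are right that this last step is not the injectivity of $\mathcal{M}_c$ from Proposition~\ref{prop:MKR_and_property_MKT}. Your moment argument is the same device the paper itself uses at the end of Proposition~\ref{prop:Multiplicative_conv_compact_support}. One small justification is worth writing out: the principal-branch identity $(z-y)^{-c}=z^{-c}(1-y/z)^{-c}$ holds for $z\in\mathbb{C}_+$, $|z|>|y|$. The reason is that $\arg z+\arg(1-y/z)$ lies in $(-\pi/2,3\pi/2)$ and is congruent modulo $2\pi$ to $\arg(z-y)\in(0,\pi)$, so the two agree.

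What each approach buys:
\begin{itemize}
\item Your route costs a few lines but uses only the Markov--Krein relation already available.
\item Your stick-breaking fallback is also valid. Quantile functions converge off a countable set, and dominated convergence applies on $K$, so it gives the stronger almost-sure coupling $Y_{\mu_n}\to Y_\mu$.
\item The paper's citation route is shorter and gives continuity for the random measure $\mathsf{DP}(c\,\mu)$ itself, not just its mean.
\end{itemize}
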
 

\begin{proof}
Part-(i) is given in \cite[Section 3.2]{Lijoi2009}. Part (ii) follows from the continuity of the Dirichlet process \cite[Theorem 3.10]{DelloSchiavo2019}. 
\end{proof}

\begin{lemma}[The Markov-Krein Transform commutes with affine shifts]
\label{lem:affine_shift}
Fix $c>0$ and for  $a ,b \in \mathbb{R}$, let $T_{a,b} : x \mapsto ax  +b$ then for any $\mu \in\mathcal{P}_{\mathrm{log}}(\mathbb{R})$ we have $\mathcal{M}_c \big( {(T_{a,b})}_\# \mu \big) = {(T_{a,b})}_\# \mathcal{M}_c  \mu$. 
\end{lemma}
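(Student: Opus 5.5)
The plan is to argue directly at the level of the Dirichlet process, since pushforwards interact cleanly with Ferguson's definition. We dispose of the degenerate case $a=0$ first. There $(T_{0,b})_\#\mu=\delta_b$, and a Dirichlet process with base $\delta_b$ is almost surely $\delta_b$. Hence both sides equal $\delta_b$.

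For $a\neq 0$, let $P\sim\mathsf{DP}(c\,\mu)$ and set $Q:=(T_{a,b})_\# P$, a random probability measure. Since $T_{a,b}$ is a Borel bijection of $\mathbb{R}$, a Borel partition $E_1,\dots,E_m$ of $\mathbb{R}$ pulls back to the Borel partition $T_{a,b}^{-1}(E_1),\dots,T_{a,b}^{-1}(E_m)$. Therefore
\[
(Q(E_1),\dots,Q(E_m))=(P(T_{a,b}^{-1}E_1),\dots,P(T_{a,b}^{-1}E_m))\sim \mathsf{Dir}\big(c\,(T_{a,b})_\#\mu(E_1),\dots,c\,(T_{a,b})_\#\mu(E_m)\big).
\]
This shows $Q\sim \mathsf{DP}(c\,(T_{a,b})_\#\mu)$. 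Only finite-dimensional laws enter the definition, so this identifies the law of $Q$ as a random measure. We should also note that $(T_{a,b})_\#\mu\in\mathcal{P}_{\mathrm{log}}(\mathbb{R})$, because $\log(1+|ax+b|)\le \log(1+|a||x|+|b|)\le \log(1+|x|)+C_{a,b}$. The Dirichlet mean of $(T_{a,b})_\#\mu$ is thus well defined.

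It then remains to compute the mean. Almost surely $\int|x|P(\mathrm{d}x)<\infty$ and $P$ is a probability measure, so
\[
\int y\, Q(\mathrm{d}y)=\int (ax+b)\,P(\mathrm{d}x)=a\,Y_\mu+b .
\]
Since the law of the mean functional depends only on the law of the random measure, $\mathcal{M}_c((T_{a,b})_\#\mu)=\mathsf{Law}(aY_\mu+b)=(T_{a,b})_\#\mathcal{M}_c\mu$.

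The only mildly delicate point is the step that passes from ``$Q$ has the Dirichlet-process finite-dimensional distributions'' to ``the law of $\int y\,Q(\mathrm{d}y)$ equals $\mathcal{M}_c$ of the base''. I would handle it by noting that the law of a random probability measure is determined by its finite-dimensional distributions (a $\pi$-$\lambda$ argument on the generating cylinder sets). The mean is a measurable functional of the random measure, being a limit of integrals of truncations.

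As an alternative check, one can verify the identity through the Markov–Krein relation \eqref{eq:MK_relation} together with the injectivity in Proposition~\ref{prop:MKR_and_property_MKT}. For $a>0$, substitute $z=az'+b$ and use $(az'+b-ay-b)^{-c}=a^{-c}(z'-y)^{-c}$ and $\log(az'+b-ax-b)=\log a+\log(z'-x)$; the factors $a^{-c}$ match on both sides. The case $a<0$ requires care with branches and conjugation, which is why the direct Dirichlet-process argument is the one I would carry out.
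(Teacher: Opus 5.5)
Your proof is correct and is the paper's argument: push $P\sim\mathsf{DP}(c\,\mu)$ forward by $T_{a,b}$, identify the result as $\mathsf{DP}(c\,(T_{a,b})_\#\mu)$, and read off its mean $aY_\mu+b$; your treatment of $a=0$, the log-moment check and the measurability of the mean functional fill in details the paper leaves implicit. One small correction to your side remark: the Markov--Krein route needs that $\eta\mapsto\int(z-y)^{-c}\eta(\mathrm{d}y)$ determines $\eta$ (uniqueness of the generalized Stieltjes transform), not the injectivity of $\mu\mapsto\mathcal{M}_c\mu$ stated in Proposition~\ref{prop:MKR_and_property_MKT}.
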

\begin{proof}
For $P \sim \mathsf{DP}( c \, \mu)$ we have ${(T_{a,b})}_\# P \sim \mathsf{DP}( c \,  ({(T_{a,b})}_\# \mu) $ which by pushing forward the map, has a Dirichlet mean $a \int x P(\mathrm{d}x) + b$ which gives the desired result. We note that this relation can be also obtained  from the Markov-Krein relation \eqref{eq:MK_relation}. 
\end{proof}

\begin{lemma}
\label{lem:weak_closure}
 Let $c>0$ and a sequence  $(\rho_n)_n$ with $\rho_n \in\mathcal{P}_{\mathrm{log}}(\mathbb{R})$. If $\eta_n = \mathcal{M}_c \rho_n \Rightarrow \eta \in \mathcal{P}(\mathbb{R})$, then there exists a unique $\rho \in \mathcal{P}_{\mathrm{log}}(\mathbb{R})$ such that $\eta = \mathcal{M}_c \rho$ and $\rho_n \Rightarrow \rho$.   
\end{lemma}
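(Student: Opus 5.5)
Uniqueness is immediate from the injectivity in Proposition~\ref{prop:MKR_and_property_MKT}, so the work is existence together with $\rho_n \Rightarrow \rho$. The plan has three steps: show tightness of $(\rho_n)$, identify every subsequential limit through the Markov--Krein relation \eqref{eq:MK_relation} evaluated on the imaginary axis, and then obtain the logarithmic moment of $\rho$ from the tail behaviour of $\eta$.

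\emph{Step 1: tightness.} Take $z = \mathrm{i} y$ with $y>0$. We have $|\mathrm{i}y - x| \ge \max(y,|x|)$, and the principal branch is used in \eqref{eq:MK_relation}. Taking moduli gives
\begin{align}
\bigg| \int (\mathrm{i}y - u)^{-c}\, \eta_n(\mathrm{d}u) \bigg| = \exp\Big\{ -c \int \log|\mathrm{i}y - x| \, \rho_n(\mathrm{d}x) \Big\} \le \exp\Big\{-c \log y - c\,\rho_n(|x|>My)\log M \Big\}
\end{align}
for every $M>1$. Since $\eta_n \Rightarrow \eta$ and $u \mapsto (\mathrm{i}y-u)^{-c}$ is bounded and continuous, the left-hand side converges. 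Multiplied by $y^{c}$, it tends to $1$ as $y\to\infty$, uniformly in $n$ by the tightness of $(\eta_n)$. Consequently, for $y$ large and all $n$, we get $\rho_n(|x|>My) \le \varepsilon / \log M$ up to a uniform constant, and hence $(\rho_n)$ is tight. A cleaner version of the same idea is to use the identity $\int \log|1 - x/(\mathrm{i}y)|\,\rho_n(\mathrm{d}x) = -\tfrac1c \log|y^{c}\int(\mathrm{i}y-u)^{-c}\eta_n(\mathrm{d}u)|$ together with the pointwise bound $\log|1-x/(\mathrm{i}y)|\ge \tfrac12\log 2\,\mathds{1}_{|x|>y}$.

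\emph{Step 2: identification of limits.} Let $\rho_{n_k}\Rightarrow\rho$ along a subsequence. We need
\begin{align}
\int \log(\mathrm{i}y-x)\,\rho_{n_k}(\mathrm{d}x) \to \int \log(\mathrm{i}y-x)\,\rho(\mathrm{d}x).
\end{align}
The imaginary part $\arg$ is bounded and continuous, so it converges. The real part $\log|\mathrm{i}y - x|$ is continuous and bounded below, but it is unbounded above, so weak convergence only gives the Fatou inequality. To close the gap I would use a uniform integrability statement: the modulus identity in Step~1 shows that $\int \log|\mathrm{i}y-x|\,\rho_n(\mathrm{d}x)$ is bounded uniformly in $n$. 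With Fatou this gives $\int\log(1+|x|)\,\rho(\mathrm{d}x)<\infty$, so $\rho\in\mathcal{P}_{\mathrm{log}}(\mathbb{R})$. For convergence of the log-integral itself, the natural approach is to compare two values of $y$. The function $\log|\mathrm{i}y_1-x| - \log|\mathrm{i}y_2-x|$ is bounded and continuous in $x$, so its integrals converge. One then lets $y_2\to\infty$, using that the tail contributions $\int \log(1 + |x|/y_2)\,\rho_n(\mathrm{d}x)$ are small uniformly in $n$. That uniform smallness is again read off from \eqref{eq:MK_relation}, since $y_2^{c}\int(\mathrm{i}y_2-u)^{-c}\,\eta_n(\mathrm{d}u)\to1$ uniformly in $n$ as $y_2\to\infty$ by tightness of $(\eta_n)$. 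This limit interchange is the main obstacle. Once it is done, passing to the limit in \eqref{eq:MK_relation} gives $\int (z-u)^{-c}\,\eta(\mathrm{d}u)= \exp\{-c\int\log(z-x)\,\rho(\mathrm{d}x)\} = \int (z-u)^{-c}\,\mathcal{M}_c\rho(\mathrm{d}u)$ on the imaginary axis.

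\emph{Step 3: conclusion.} Both sides of this equality are holomorphic on $\mathbb{C}_+$, so they agree on all of $\mathbb{C}_+$. The generalized Stieltjes transform $\int(z-u)^{-c}\,\nu(\mathrm{d}u)$ determines $\nu$; this is the same ingredient used for injectivity in \cite{Lijoi2004}. Therefore $\eta=\mathcal{M}_c\rho$. By injectivity of $\mathcal{M}_c$ (Proposition~\ref{prop:MKR_and_property_MKT}) every subsequential limit equals this same $\rho$, and together with tightness this gives $\rho_n\Rightarrow\rho$.
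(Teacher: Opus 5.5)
Your argument is correct, but it takes a different route from the paper.

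\emph{The paper's route.} It differentiates the Markov--Krein relation to pass to ordinary Stieltjes transforms, $g_{\rho_n}=-\frac1c (G^{(c)}_{\eta_n})'/G^{(c)}_{\eta_n}$. It uses Hurwitz's theorem to guarantee $G^{(c)}_\eta\neq 0$ on $\mathbb{C}_+$, so that these quotients converge locally uniformly. The Stieltjes continuity theorem then produces $\rho$ and $\rho_n\Rightarrow\rho$ at once; tightness is encoded in the normalization $\mathrm{i}y\,g(\mathrm{i}y)\to1$. The logarithmic moment comes from integrating $\frac{\mathrm{d}}{\mathrm{d}r}\log|G^{(c)}_\eta(\mathrm{i}r)|$, and $\eta=\mathcal{M}_c\rho$ is imported from the logarithmic-derivative characterization \cite[Lemma IV.7]{nakano2025classical}.

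\emph{Your route.} You work with the modulus of the undifferentiated relation, $\frac12\int\log(1+x^2/y^2)\,\rho_n(\mathrm{d}x)=-\frac1c\log|y^{c}G^{(c)}_{\eta_n}(\mathrm{i}y)|$. Its right-hand side tends to $0$ uniformly in $n$ by tightness of $(\eta_n)$. This single estimate gives three things: tightness of $(\rho_n)$, the logarithmic moment of any subsequential limit via Fatou, and the uniform tail control behind your two-$y$ comparison. That comparison lets you pass to the limit in the logarithmic potential itself.

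\emph{Trade-off.} Your route is more self-contained. Nonvanishing of $G^{(c)}_\eta$ is automatic, since it is the exponential of a finite quantity. You need neither Hurwitz, nor the Stieltjes continuity theorem, nor the external characterization lemma. The price is the limit interchange of your Step 2, which the paper sidesteps because the kernel $1/(z-x)$ is bounded.

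\emph{One precision for Step 3.} What you need there is that $\nu\mapsto\int(z-u)^{-c}\,\nu(\mathrm{d}u)$ is injective on $\mathcal{P}(\mathbb{R})$. This is a uniqueness statement for the law of the mean, not quite the injectivity of $\mathcal{M}_c$; the latter only needs ordinary Stieltjes inversion of the base measure after differentiating the logarithm. The fact is standard, e.g.\ by Fourier analysis in $\mathfrak{R}z$ at fixed $\mathfrak{I}z$. The paper's cited characterization lemma necessarily contains the same fact, since its hypothesis involves $\eta$ only through $G^{(c)}_\eta$.
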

\begin{proof}
 Let $c>0$ and $\tau \in \mathcal{P}(\mathbb{R})$, we introduce the standard and generalized Stieltjes transform of order $c>0$ of $\tau$ as
\begin{align}
\label{eq:definition_stietljes}
    g_{\tau}(z) := \int_{\mathbb{R}} \frac{1}{z-x} \tau(\mathrm{d}x) ,  
    \qquad 
    G^{(c)}_\tau(z) := \int_{\mathbb{R}} (z-x)^{-c} \tau(\mathrm{d}x) , \qquad z \in \mathbb{C}_+. 
\end{align}
Weak convergence of the sequence $\eta_n$ gives that $g_{n} := g_{\eta_n} \to g_{\eta}$ and $  G^{(c)}_n :=   G^{(c)}_{\eta_n} \to   G^{(c)}_\eta$ locally uniformly on $\mathbb{C}_+$ and for every $\tau \in \mathcal{P}(\mathbb{R})$ we have 
\begin{align}
\label{eq:limit_gen_stietjes_inf}
    (\mathrm{i}y)^c G^{(c)}_\tau(\mathrm{i}y)  \to 1, \qquad ( \mathrm{i} y)^{c+1} {(G^{(c)}_\tau)}^\prime ( \mathrm{i} y)  \to -c, \qquad \mbox{ as } y\to\infty,
\end{align}
hence $G^{(c)}_\tau$ is not identically zero. Form the Markov-Krein relation of Prop.~\ref{prop:MKR_and_property_MKT} and Hurwitz theorem \cite[Chapter VIII,Section 3]{gamelin2001complex} $G^{(c)}_\eta(z) \neq 0$ for all $z \in \mathbb{C}_+$ and differentiating the Markov-Krein relation \eqref{eq:MK_relation} and taking the limit $n \to \infty$ gives
\begin{align}
\label{eq:convergence_D_MKR}
    g_{\rho_n} = -\frac{1}{c} \frac{ (G^{(c)}_{\eta_n})^\prime  }{G^{(c)}_{\eta_n}} 
    &\to
    g = 
    -\frac{1}{c} \frac{ (G^{(c)}_{\eta})^\prime  }{G^{(c)}_{\eta}},
\end{align}
locally uniformly on $\mathbb{C}_+$ and Eq.~\eqref{eq:limit_gen_stietjes_inf} gives the asymptotic $\mathrm{i}y g(\mathrm{i}y) \to 1$ as $y \to \infty$. By Stieltjes continuity theorem \cite{geronimo2003necessary}, there is a probability measure $\rho$ such that $g = g_\rho$ and $\rho_n \Rightarrow \rho$. 
We check that $\rho$ has finite logarithmic moment. Form the righthand side of Eq.~\eqref{eq:convergence_D_MKR}  we have for $r>0$:
\begin{align}
    \frac{\mathrm{d}}{\mathrm{d}r}\log  |G^{(c)}_\eta (\mathrm{i}r)| = - c \int_{\mathbb{R}} \frac{r}{x^2 +r^2} \rho(\mathrm{d}x), 
\end{align}
which by integrating from $r=1$ to $r=R>1$ and adding $\log R^c =c \log R$ to both sides gives
\begin{align}
    \log  R^c |G^{(c)}_\eta (\mathrm{i}R)| -\log  |G^{(c)}_\eta (\mathrm{i})| = \frac{c}{2} \int_{\mathbb{R}} \log \bigg( \frac{1+x^2}{1+\frac{x^2}{R^2}} \bigg)\rho(\mathrm{d}x)
    \le \frac{c}{2} \int_{\mathbb{R}} \log (1+x^2)\rho(\mathrm{d}x).
\end{align}
Now by Eq.~\eqref{eq:limit_gen_stietjes_inf}, $\log  R^c |G^{(c)}_\eta (\mathrm{i}R)|\to 0$ as $R\to \infty$ and since  $0 \le \log \bigg( \frac{1+x^2}{1+\frac{x^2}{R^2}} \bigg) \le \log (1+x^2)$ with equality as $R\to\infty$, we obtain by monotone convergence
\begin{align}
    \int_{\mathbb{R}} \log (1+x^2)\rho(\mathrm{d}x) \le -\frac{2}{c} \log  |G^{(c)}_\eta (\mathrm{i})| < \infty . 
\end{align}
Since $\log(1+|x|) \le \log(1+x^2) +\frac{1}{2} \log 2$, we have $\rho \in \mathcal{P}_\mathrm{log}(\mathbb{R})$.  The logarithmic-derivative characterization \cite[Lemma IV.7]{nakano2025classical} gives $\eta = \mathcal{M}_c \rho$. Uniqueness follows from injectivity of $\mathcal{M}_c$. 
\end{proof}

\subsection{Proof of Part-(ii) of Thm.~\ref{thm:main_theorem} for compactly supported measures}

\begin{proposition}[Stability under multiplicative convolution for compactly supported measures]
\label{prop:Multiplicative_conv_compact_support}
For $c>0$ and $\mu, \nu \in \mathcal{P}_{\mathrm{c}}(\mathbb{R}_+)$  there exists a unique probability measure $\rho \in \mathcal{P}_{\mathrm{c}}(\mathbb{R}_+)$ such that $\mathcal{M}_c \rho = (\mathcal{M}_c \mu) \circledast (\mathcal{M}_c \nu)$. Furthermore if $\mathrm{Supp} \mu \subseteq [0,a]$ and $\mathrm{Supp} \nu \subseteq [0,b]$, then $\mathrm{Supp} \rho \subseteq [0, ab]$.  
\end{proposition}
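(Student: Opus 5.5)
By the scaling case of Lemma~\ref{lem:affine_shift} (take $T_{a,0}$ and $T_{b,0}$), we may assume $\mathrm{Supp}\,\mu,\mathrm{Supp}\,\nu\subseteq[0,1]$. Indeed, $(T_{a,0})_\#$ commutes with $\mathcal{M}_c$, and $(T_{a,0})_\#\eta\circledast(T_{b,0})_\#\eta'=(T_{ab,0})_\#(\eta\circledast\eta')$. So it suffices to produce $\rho$ supported in $[0,1]$. The support claim $[0,ab]$ then follows after rescaling back. Uniqueness in all cases is the injectivity of $\mathcal{M}_c$ from Prop.~\ref{prop:MKR_and_property_MKT}.

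The plan is to encode a measure through the Stieltjes-type function of its Markov--Krein transform, so that Mellin convolution becomes the Hadamard product. Set $\alpha=1-c/2$, so that $2(1-\alpha)=c$ and $3-2\alpha=c+1$. For $\mu\in\mathcal{P}([0,1])$ with $\eta=\mathcal{M}_c\mu$, which lives on $[0,1]$ by Lemma~\ref{lem:support_MK}(i), define
\begin{align}
F_\mu(z)=z\int\frac{\eta(\mathrm{d}y)}{1-zy}=z\sum_{n\ge0}m_n(\eta)z^n\in\mathcal{H}_1(\mathbb{\Lambda}).
\end{align}
Then $\Delta^cF_\mu(z)=z\sum_n\frac{(c)_n}{n!}m_n(\eta)z^n=z\int(1-zy)^{-c}\eta(\mathrm{d}y)$. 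By the Markov--Krein relation \eqref{eq:MK_relation}, applied with $z\mapsto 1/z$ and the factors $z^{-c}$ cancelled, this equals $z\exp\{-c\int\log(1-zx)\mu(\mathrm{d}x)\}$. The identity holds first for small $|z|$ and then on $\mathbb{\Lambda}$ by analytic continuation. Hence
\begin{align}
\frac{z(\Delta^cF_\mu)'}{\Delta^cF_\mu}=1+c\int\frac{zx}{1-zx}\mu(\mathrm{d}x).
\end{align}
By \eqref{eq:quotient_simplication}, $\Delta^{c+1}F_\mu/\Delta^cF_\mu=\int\frac{\mu(\mathrm{d}x)}{1-zx}\in\mathcal{T}$. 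Theorem~\ref{thm:prestartlike_stieltjes}(i) then gives $F_\mu\in\mathcal{R}^{(\mathrm u)}_\alpha$, and similarly $F_\nu\in\mathcal{R}^{(\mathrm u)}_\alpha$.

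Since moments multiply under $\circledast$, we have $F:=F_\mu\odot F_\nu=z\sum_n m_n(\eta_\mu\circledast\eta_\nu)z^n$. Writing $\eta:=\eta_\mu\circledast\eta_\nu\in\mathcal{P}([0,1])$, this is $F=z\int\frac{\eta(\mathrm{d}y)}{1-zy}$. By Theorem~\ref{thm:prestartlike_stieltjes}(ii), $F\in\mathcal{R}^{(\mathrm u)}_\alpha$. By Theorem~\ref{thm:prestartlike_stieltjes}(i) in the converse direction, there is then $\rho\in\mathcal{P}([0,1])$ with $\Delta^{c+1}F/\Delta^cF=\int\frac{\rho(\mathrm{d}x)}{1-zx}$. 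Running the computation above backwards through \eqref{eq:quotient_simplication} gives
\begin{align}
\frac{\mathrm{d}}{\mathrm{d}z}\log\frac{\Delta^cF(z)}{z}=c\int\frac{x}{1-zx}\rho(\mathrm{d}x)
\end{align}
near $0$. Since $\Delta^cF(z)/z\to1$ at the origin, integration gives $\int(1-zy)^{-c}\eta(\mathrm{d}y)=\exp\{-c\int\log(1-zx)\rho(\mathrm{d}x)\}$ near $0$.

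The same identity holds with $\eta$ replaced by $\mathcal{M}_c\rho$, again supported in $[0,1]$. Comparing Taylor coefficients gives $\frac{(c)_n}{n!}m_n(\eta)=\frac{(c)_n}{n!}m_n(\mathcal{M}_c\rho)$. As $(c)_n>0$ and both measures are supported in $[0,1]$, the moment problem is determinate, so $\eta=\mathcal{M}_c\rho$.

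The main obstacle is really just matching conventions. One must check that the $\mathcal{T}$-membership criterion applies with $\Delta^{2(1-\alpha)}=\Delta^c$, which is valid for every $c>0$ since $\alpha<1$ is arbitrary. One must also keep track of the branch of $(1-zy)^{-c}$ and $\log(1-zx)$ when passing between the upper-half-plane form \eqref{eq:MK_relation} and the disk form. This works because the supports lie in $[0,1]$, and all identities can be checked as power series near $0$, which avoids any global continuation issues.
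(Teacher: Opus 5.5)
Your proposal is correct and follows the paper's proof essentially step by step. It uses the same encoding $F_\mu=z\int(1-zy)^{-1}\mathcal{M}_c\mu(\mathrm{d}y)$ with $\alpha=1-c/2$ and the identification $\Delta^{c+1}F_\mu/\Delta^cF_\mu=\int(1-zx)^{-1}\mu(\mathrm{d}x)\in\mathcal{T}$, then the Hadamard closure theorem, then the return trip by integrating the logarithmic derivative and using moment determinacy on $[0,1]$. Your explicit change of variables $z\mapsto 1/z$ in \eqref{eq:MK_relation} spells out a step the paper leaves implicit, and your rescaling also covers the degenerate case $a=0$ or $b=0$, since $\mu=\delta_0$ is then a pushforward under $T_{0,0}$.
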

\begin{proof}
By Lemma~\ref{lem:affine_shift} it suffices to prove the result for $\mu,\nu \in \mathcal{P}([0,1])$ since for $a,b>0$ we can always apply the linear push forward maps $x\mapsto x/a$ and  $x\mapsto x/b$  and then dilate the output distribution by $x \mapsto ab \, x$. If $a=0$ and/or $b=0$, one input is zero and so is the output.  Let $Y_\mu$, $Y_\nu$ (independent) be their associated Dirichlet means of order $c$, taking values in $[0,1]$ by Lemma~\ref{lem:support_MK}. We define 
\begin{align}
\label{eq:expression_gen_stieltjes}
    G^{(c)}_\mu(z)  := \mathbb{E}_{Y_\mu} \Big[ ( 1 - zY_\mu)^{-c} \Big] = \mathrm{exp} \Big\{- c \int_{[0,1]} \log (1 - zx) \mu(\mathrm{d}x) \Big\} \, , \qquad z \in \mathbb{\Lambda},
\end{align}
where the second equality follows from the Markov-Krein relation \eqref{eq:MK_relation}. Similarly let
\begin{align}
    H_\mu(z) := z  \mathbb{E}_{Y_\mu} \Big[ ( 1 - zY_\mu)^{-1} \Big], \qquad   z \in \mathbb{\Lambda}. 
\end{align}
 These functions are holomorphic on $\mathbb{\Lambda}$ with ${H}^{\prime}_{\mu}(z) = \mathbb{E}_{Y_\mu} \Big[ ( 1 - zY_\mu)^{-2} \Big]$, thus $H_\mu(0)=0$ and ${H}^{\prime}_{\mu}(0)=1$ that is $H_\mu \in \mathcal{H}_1(\mathbb{\Lambda})$. The binomial series \eqref{eq:binomial_series} gives that these two functions are linked by 
 \begin{align}
     \Delta^{c} H_\mu =  z G^{(c)}_\mu(z), 
 \end{align}
 such that, thanks to Eq.~\eqref{eq:quotient_simplication}, we can express the quotient as 
 \begin{align}
     \frac{\Delta^{c+1} H_\mu}{\Delta^{c} H_\mu} 
     &= 
      1 + \frac{1}{c}  \frac{ z (G^{(c)}_\mu)'(z)}{ G^{(c)}_\mu(z)} ,
 \end{align}
 and by differentiating the right-hand-side of Eq.~\eqref{eq:expression_gen_stieltjes} we obtain 
 \begin{align}
      \frac{\Delta^{c+1} H_\mu}{\Delta^{c} H_\mu} 
     &= \int_{[0,1]} \frac{1}{1-zx} \mu(\mathrm{d}x) \in \mathcal{T}
 \end{align}
by Eq.~\eqref{eq:def_Stieltjes_class}. Thus, Part-(i) of Thm.~\ref{thm:prestartlike_stieltjes} implies that for all $\mu \in \mathcal{P}([0,1])$, $H_\mu \in \mathcal{R}^{(\mathrm{u})}_{1-c/2}$. 

    Next, let  $Y = Y_\mu Y_\nu$ taking values in $[0,1]$ and define 
    \begin{align}
    \label{eq:def_Htilde}
        {\tilde{H}}_{Y}(z) 
        &=
        z \mathbb{E}_{Y}[ (1 -z Y)^{-1} ] \qquad z \in \mathbb{\Lambda}, 
    \end{align}
which is again holomorphic on $\mathbb{\Lambda}$. By independence of $Y_\mu, Y_\nu$ we have 
\begin{align}
     {\tilde{H}}_{Y}(z) 
        &=
         z \sum_{n\ge 0} \mathbb{E}[Y^n] z^n = z \sum_{n\ge 0} \mathbb{E}[Y_\mu^n] \mathbb{E}[Y_\nu^n] z^n =  (H_\mu \odot H_\nu)(z) &\qquad z \in \mathbb{D}. 
\end{align}
The series equality hold on $\mathbb{D}$ and is continued on $\mathbb{\Lambda}$ by Eq.~\eqref{eq:def_Htilde}.  
Hence by Part-(ii) of Thm.~\ref{thm:prestartlike_stieltjes}, $ \tilde{H}_{Y} \in \mathcal{R}^{(\mathrm{u})}_{1-c/2}$. We can now apply the direct implication of Part-(i) of Thm.~\ref{thm:prestartlike_stieltjes} to $  {\tilde{H}}_{Y}$: there exists $\rho \in \mathcal{P}([0,1])$ such that 
\begin{align}
      \frac{\Delta^{c+1} {\tilde{H}}_{Y}}{\Delta^{c}  {\tilde{H}}_{Y}} 
     &= \int_{[0,1]} \frac{1}{1-zx} \rho(\mathrm{d}x) .
 \end{align}
Define  on $\mathbb{\Lambda}$, $\tilde{G}^{(c)}_Y(z) := \mathbb{E}[ (1-zY)^{-c}]$. The same previous computation gives
 \begin{align}
     \frac{(\tilde{G}^{(c)}_Y)^{\prime}(z)}{\tilde{G}^{(c)}_Y(z)} = c \int_{[0,1]} \frac{x}{1-zx} \rho(\mathrm{d}x), 
 \end{align}
 since $\tilde{G}^{(c)}_Y(0)=1$ we get by integration and the use of the Markov-Krein relation  Eq.~\eqref{eq:MK_relation} (first in a neighborhood of the origin and then by continuation on $\mathbb{\Lambda})$: 
 \begin{align}
     \tilde{G}^{(c)}_Y(z) 
     &= \mathrm{exp} \bigg\{ -c \int_{[0,1]} \log (1 - zx) \rho(\mathrm{d}x) \bigg\}= \mathbb{E}[(1 - zY_\rho)^{-c}] =: G_\rho^{(c)}(z), \quad z \in \mathbb{\Lambda}. 
 \end{align}
Equality of the Taylor coefficients gives
\begin{align}
\mathbb{E}[Y^n] = \mathbb{E}[Y_\rho^n], \quad n \in \mathbb{N},
\end{align}
and since the two distributions are compactly supported, they are determined by their moments, that is $\mathsf{Law}(Y) =  \mathcal{M}_c \rho$ and injectivity of $\mathcal{M}_c$ gives uniqueness and  concludes the proof. 
\end{proof}

\subsection{Proof of Part-(i) of Thm.~\ref{thm:main_theorem} for compactly supported measures.}

\begin{proposition}[Stability under additive convolution for compactly supported measures]
\label{prop:additive_conv_compact_support}For $c>0$ and $\mu, \nu \in \mathcal{P}_{\mathrm{c}}(\mathbb{R})$  there exists a unique probability measure $\varrho \in \mathcal{P}_{\mathrm{c}}(\mathbb{R})$ such that $\mathcal{M}_c \varrho = (\mathcal{M}_c \mu) \ast (\mathcal{M}_c \nu)$. Moreover if $\mathrm{Supp} \mu \subseteq [a_-,a_+]$ and $\mathrm{Supp} \nu \subseteq [b_-,b_+]$, then $\mathrm{Supp} \varrho \subseteq [a_- + b_-, a_+ + b_+]$.  
\end{proposition}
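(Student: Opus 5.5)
The plan is to deduce the additive statement from Proposition~\ref{prop:Multiplicative_conv_compact_support} through an affine contraction, that is, the limit $(1+\varepsilon x)(1+\varepsilon y)\approx 1+\varepsilon(x+y)$. After that we pass to the limit using Lemma~\ref{lem:weak_closure} and Lemma~\ref{lem:support_MK}.

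\textbf{Reduction and the affine maps.} By Lemma~\ref{lem:affine_shift} we may translate $\mu$ and $\nu$ separately. This shifts $\mathcal{M}_c\mu\ast\mathcal{M}_c\nu$ by the sum of the shifts, so we can assume $\mathrm{Supp}\,\mu\subseteq[0,A]$ and $\mathrm{Supp}\,\nu\subseteq[0,B]$. For $\varepsilon>0$ set $T_\varepsilon(x)=1+\varepsilon x$. The measures $\mu_\varepsilon=(T_\varepsilon)_\#\mu$ and $\nu_\varepsilon=(T_\varepsilon)_\#\nu$ lie in $\mathcal{P}_{\mathrm c}(\mathbb{R}_+)$, with supports in $[1,1+\varepsilon A]$ and $[1,1+\varepsilon B]$. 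Proposition~\ref{prop:Multiplicative_conv_compact_support} gives $\rho_\varepsilon\in\mathcal{P}_{\mathrm c}(\mathbb{R}_+)$ with $\mathcal{M}_c\rho_\varepsilon=\mathcal{M}_c\mu_\varepsilon\circledast\mathcal{M}_c\nu_\varepsilon$. By Lemma~\ref{lem:affine_shift}, $\mathcal{M}_c\mu_\varepsilon=\mathsf{Law}(1+\varepsilon Y_\mu)$, and similarly for $\nu$. Hence
\begin{align}
\mathcal{M}_c\rho_\varepsilon=\mathsf{Law}\big((1+\varepsilon Y_\mu)(1+\varepsilon Y_\nu)\big)=\mathsf{Law}\big(1+\varepsilon(Y_\mu+Y_\nu)+\varepsilon^2 Y_\mu Y_\nu\big).
\end{align}
Define $\varrho_\varepsilon=(S_\varepsilon)_\#\rho_\varepsilon$ with $S_\varepsilon(x)=(x-1)/\varepsilon$. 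Lemma~\ref{lem:affine_shift} again gives $\mathcal{M}_c\varrho_\varepsilon=\mathsf{Law}(Y_\mu+Y_\nu+\varepsilon Y_\mu Y_\nu)$. As $\varepsilon\downarrow0$ this converges almost surely, hence weakly, to $\mathsf{Law}(Y_\mu+Y_\nu)=\mathcal{M}_c\mu\ast\mathcal{M}_c\nu$. Here we use that $Y_\mu$ and $Y_\nu$ are bounded by Lemma~\ref{lem:support_MK}(i).

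\textbf{Passage to the limit and support.} Lemma~\ref{lem:weak_closure} yields a unique $\varrho\in\mathcal{P}_{\log}(\mathbb{R})$ with $\mathcal{M}_c\varrho=\mathcal{M}_c\mu\ast\mathcal{M}_c\nu$ and $\varrho_\varepsilon\Rightarrow\varrho$. Uniqueness also follows directly from injectivity of $\mathcal{M}_c$ (Proposition~\ref{prop:MKR_and_property_MKT}).

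For compact support, one option is to bound the supports of the $\varrho_\varepsilon$ and use closedness. The cleaner option is to apply Lemma~\ref{lem:support_MK}(i) to each $\varrho_\varepsilon$, which is compactly supported. Its convex support hull equals that of $\mathcal{M}_c\varrho_\varepsilon$, and the latter is contained in $[0,A+B+\varepsilon AB]$. The weak limit $\varrho$ is therefore supported in $\bigcap_\varepsilon[0,A+B+\varepsilon AB]=[0,A+B]$. Undoing the initial translations gives $\mathrm{Supp}\,\varrho\subseteq[a_-+b_-,a_++b_+]$.

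\textbf{Main obstacle.} The delicate point is the bookkeeping of the affine maps. We must check that the nonlinear cross term $\varepsilon^2Y_\mu Y_\nu$ only perturbs the law at the level of $\mathcal{M}_c\varrho_\varepsilon$ and is never required to be a Markov--Krein image itself. We must also check that positivity of $\rho_\varepsilon$ survives the rescaling $S_\varepsilon$, which it does since pushforwards preserve probability measures. Lemma~\ref{lem:weak_closure} then carries all the analytic content of the limit. The support bound relies on Lemma~\ref{lem:support_MK}(i) applied at fixed $\varepsilon$, before taking the limit.
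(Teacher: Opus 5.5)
Your proof is correct and uses the same core idea as the paper. Both arguments apply the contraction $x\mapsto 1+\varepsilon x$, then Proposition~\ref{prop:Multiplicative_conv_compact_support}, then the identity $\mathcal{M}_c\varrho_\varepsilon=\mathsf{Law}(Y_\mu+Y_\nu+\varepsilon Y_\mu Y_\nu)$. Both get the support bound from Lemma~\ref{lem:support_MK}(i) at fixed $\varepsilon$ and then closedness under weak limits.

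It differs from the paper in two places:
\begin{itemize}
\item Your preliminary translation to $[0,A]$ and $[0,B]$ makes every $\varepsilon>0$ admissible. The paper keeps the original supports and restricts to $\varepsilon$ below a threshold $\epsilon_0$. This difference is cosmetic.
\item More substantively, you pass to the limit with the inverse-continuity Lemma~\ref{lem:weak_closure}. The paper instead uses the uniform bound $\mathrm{Supp}\,\varrho_\varepsilon\subseteq[-K,K]$ to get tightness and extracts a subsequential limit by Prokhorov. It then identifies that limit through the forward continuity of $\mathcal{M}_c$ on a common compact interval (Lemma~\ref{lem:support_MK}(ii)) together with injectivity.
\end{itemize}

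The paper's route keeps the compactly supported case elementary: it needs only continuity of the Dirichlet process, and compactness of the limit comes for free. Your route reuses the heavier Stieltjes-transform lemma, which the paper needs anyway for the $\mathcal{P}_{\mathrm{log}}$ extension. The price is that Lemma~\ref{lem:weak_closure} only gives $\varrho\in\mathcal{P}_{\mathrm{log}}(\mathbb{R})$, so you recover compact support afterwards from the support bound.

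One small point: Lemma~\ref{lem:weak_closure} is stated for sequences, so you should fix some $\varepsilon_n\downarrow 0$. Injectivity of $\mathcal{M}_c$ then makes the limit independent of that choice.
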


\begin{proof}
    Let $\epsilon>0$, and consider the affine maps $A_\epsilon : x \mapsto 1 + \epsilon x$ and $A^{-1}_\epsilon : x \mapsto (x-1)/\epsilon$. Let  $\epsilon_0 = 1/(1+\max(0,-a_-,-b_-))$, then for $\epsilon \in (0,\epsilon_0)$ we have ${(A_\epsilon)}_\# \mu ,{(A_\epsilon)}_\# \nu \in \mathcal{P}_{\mathrm{c}}(\mathbb{R}_+)$. From Prop.~\ref{prop:Multiplicative_conv_compact_support}, there exists $\rho_\epsilon \in \mathcal{P}_{\mathrm{c}}(\mathbb{R}_+)$ such that 
    \begin{align}
    \label{eq:rho_epsilon}
        \mathcal{M}_c(\rho_\epsilon)
        &=
       \mathcal{M}_c( {(A_\epsilon)}_\# \mu) \circledast  \mathcal{M}_c( {(A_\epsilon)}_\# \nu).
    \end{align}
    For $\epsilon \in (0,\epsilon_0)$, the positive random variable $(1+\epsilon Y_\mu)(1+\epsilon Y_\nu)$ takes values in 
    \begin{align}
        I_{\epsilon} := [(1 + \epsilon a_-)( 1+ \epsilon b_-) , (1 + \epsilon a_+)( 1+ \epsilon b_+)  ] \subseteq \mathbb{R}_+,
    \end{align}
    and by Part-(i) of Lemma \ref{lem:support_MK}, we also have $\mathrm{Supp} \rho_\epsilon \subseteq  I_{\epsilon} $. 
    
    Next define $\varrho_\epsilon := (A^{-1}_\epsilon)_\# \rho_\epsilon$. By applying $A^{-1}_\epsilon$ to Eq.~\eqref{eq:rho_epsilon} and since the Markov Krein transform commutes with affine maps (Lemma~\ref{lem:affine_shift}) we have
    \begin{align}
    \label{eq:MKR_epsilon}
         \mathcal{M}_c \varrho_\epsilon  
        &=  \mathsf{Law}\Big( \epsilon^{-1}(  (1+\epsilon Y_\mu)(1+\epsilon Y_\nu) -1)\Big) 
        = \mathsf{Law}( Y_\mu + Y_\nu + \epsilon Y_\mu Y_\nu). 
    \end{align}
    Furthermore, 
    \begin{align}
    \label{eq:bound_support_eps}
        \mathrm{Supp} \varrho_\epsilon \subseteq J_\epsilon := A^{-1}_\epsilon I_\epsilon = [ a_- + b_- + \epsilon a_-b_- , a_+ + b_+ + \epsilon a_+ b_+]. 
    \end{align}
    For $\epsilon \in (0,\epsilon_0)$, there exists a $K>0$ such that $J_\epsilon \subseteq [-K,K]$ and therefore the family $(\varrho_{\epsilon})_\epsilon$ is tight. By Prokhorov's theorem \cite[Section 5]{billingsley1999convergence}, starting with a sequence $\epsilon_n \downarrow 0$, there exists a subsequence, still denoted $(\epsilon_n)_n$ such that $\varrho_{\epsilon_n} \Rightarrow \varrho \in \mathcal{P}([-K,K])$. By Part (ii) of Lemma~\ref{lem:support_MK} and  the limit $\epsilon \downarrow 0$  of Eq.~\eqref{eq:MKR_epsilon}, we have  $\mathcal{M}_c \varrho_{\epsilon_n} \Rightarrow \mathsf{Law}( Y_\mu + Y_\nu) = \mathcal{M}_c \mu \ast \mathcal{M}_c \nu$ with the associated bound for the support following from Eq.~\eqref{eq:bound_support_eps}. Injectivity of $\mathcal{M}_c$ gives again uniqueness and implies that any subsequential limit of $(\varrho_\epsilon)_\epsilon$ converges to $\varrho$ which concludes the proof. 
\end{proof}

\subsection{Proof of Thm.~\ref{thm:main_theorem} for unbounded measures with finite logarithmic moment}
For $R>0$, let $T_R : x \mapsto \max(\min(x,R),-R)$. Let $\mu,\nu \in \mathcal{P}_{\mathrm{log}}(\mathbb{R})$ and denote by $P_\mu \sim \mathsf{DP}( c \, \mu) $ and $P_\nu \sim \mathsf{DP}( c \, \nu)$, independent. Consider the clipped measures $\mu_R = (T_R)_\# \mu$ and $\nu_R = (T_R)_\# \nu$,  pushing forward the map $T_R$ on the corresponding Dirichlet means gives
\begin{align}
    Y_{\mu,R} := Y_{\mu_R} = \int T_R(x) P_\mu(\mathrm{d}x) \qquad \mbox{and} \qquad     Y_{\nu,R} := Y_{\nu_R} = \int T_R(x) P_\nu(\mathrm{d}x).
\end{align}
Since $|T_R(x)| \le | x| $ with $T_R(x) \underset{R\to\infty}{\to} x$, path-wise dominated convergence gives $ Y_{\mu,R}  \xrightarrow[R\to \infty]{\mathrm{a.s.}} Y_\mu$ and $ Y_{\nu,R}  \xrightarrow[R\to \infty]{\mathrm{a.s.}} Y_\nu$. 

 Prop.~\ref{prop:additive_conv_compact_support} gives the existence of $\varrho_R \in \mathcal{P}_{\mathrm{c}}(\mathbb{R})$ such that 
 \begin{align}
     \mathcal{M}_c \varrho_R  &=  \mathsf{Law}( Y_{\mu,R} + Y_{\nu,R}) \Rightarrow \mathsf{Law}( Y_{\mu} + Y_{\nu}) \quad \mbox{as } R \to \infty,
 \end{align}
 and applying Lemma~\ref{lem:weak_closure} gives the existence of $\varrho \in \mathcal{P}_{\mathrm{log}}(\mathbb{R})$ such that $\varrho_R \Rightarrow \varrho$ and $\mathcal{M}_c \varrho = \mathsf{Law}( Y_{\mu} + Y_{\nu})$ which gives the desired result for the additive convolution. The proof for the multiplicative case follows identically by replacing $\mathbb{R}$ by $\mathbb{R}_+$, $Y_{\mu,R} + Y_{\nu,R}$  by $Y_{\mu,R} Y_{\nu,R}$ and applying Prop.~\ref{prop:Multiplicative_conv_compact_support} in place of  Prop.~\ref{prop:additive_conv_compact_support}. Injectivity of $\mathcal{M}_c$ gives uniqueness in both cases and concludes the proof of the main theorem.

\bibliographystyle{alpha}
\bibliography{references}

\end{document}